\numberwithin{equation}{section}
\newcommand{\Z}{{\mathbb Z}}
\newtheorem{thm}{Theorem}
\newtheorem{lemma}{Lemma}
\newtheorem{cor}[thm]{Corollary}
\title{Weighted Tribonacci sums\thanks{AMS Classification: 11B37, 11B39, 65B10}}
\author[]{Kunle Adegoke \thanks{adegoke00@gmail.com}}
\affil{Department of Physics and Engineering Physics, \mbox{Obafemi Awolowo University}, 220005 Ile-Ife, Nigeria}
\begin{document}

\date{}

\maketitle

\begin{abstract}
\noindent We derive various weighted summation identities, including binomial and double binomial identities, for Tribonacci numbers. Our results contain some previously known results as special cases.
\end{abstract}

\section{Introduction}
For $m\ge 3$, the Tribonacci numbers are defined by
\begin{equation}\label{eq.ffnh4ol}
T_m=T_{m-1}+T_{m-2}+T_{m-3}\,,\quad T_0=0,\,T_1=T_2=1\,.
\end{equation}
By writing $T_{m-1}=T_{m-2}+T_{m-3}+T_{m-4}$ and eliminating $T_{m-2}$ and $T_{m-3}$ between this recurrence relation and the recurrence relation~\eqref{eq.ffnh4ol}, a useful alternative recurrence relation is obtained for $m\ge 4$: 
\begin{equation}\label{eq.o6ns6qw}
T_m=2T_{m-1}-T_{m-4}\,,\quad T_0=0\,,\quad T_1=T_2=1\,,\quad T_3=2\,.
\end{equation}
Extension of the definition of $T_m$ to negative subscripts is provided by writing the recurrence relation~\eqref{eq.o6ns6qw} as
\begin{equation}
T_{-m}=2T_{-m+3}-T_{-m+4}\,.
\end{equation}
Anantakitpaisal and Kuhapatanakul~\cite{ananta16} proved that
\begin{equation}
T_{-m}=T_{m-1}{}^2-T_{m-2}T_m.
\end{equation}
The following identity (Feng~\cite{feng11}, equation~(3.3); Shah~\cite{shah11}, (ii)) is readily established by the principle of mathematical induction:
\begin{equation}\label{eq.c1sfqfe}
T_{m + r}  = T_{r} T_{m-2}  + (T_{r - 1}  + T_{r} )T_{m - 1}  + T_{r+1} T_{m}\,.
\end{equation}
Irmak and Alp~\cite{irmak13} derived the following identity for Tribonacci numbers with indices in arithmetic progression:
\begin{equation}\label{eq.yssh328}
T_{tm + r}  = \lambda_1(t) T_{t(m - 1) + r}  + \lambda_2(t) T_{t(m - 2) + r}  + \lambda_3(t) T_{t(m - 3) + r}\,,
\end{equation}
where,
\[
\lambda_1(t)= \alpha ^t  + \beta ^t  + \gamma ^t ,\quad \lambda_2(t)= -(\alpha \beta )^t  - (\alpha \gamma )^t  - (\beta \gamma )^t ,\quad \lambda_3(t)= (\alpha \beta \gamma )^t\,,
\]
where $\alpha$, $\beta$ and $\gamma$ are the roots of the characteristic polynomial of the Tribonacci sequence \mbox{$x^3-x^2-x-1$}. Thus,
\[
\alpha  = \frac{1}{3}\left( {1 + \sqrt [3]{19 + 3\sqrt {33} }  + \sqrt [3]{19 - 3\sqrt {33} } } \right)\,,
\]
\[
\beta  = \frac{1}{3}\left( {1 + \omega \sqrt [3]{19 + 3\sqrt {33} }  + \omega ^2 \sqrt [3]{19 - 3\sqrt {33} } } \right)
\]
and
\[
\gamma  = \frac{1}{3}\left( {1 + \omega ^2 \sqrt[3] {19 + 3\sqrt {33} }  + \omega \sqrt [3]{19 - 3\sqrt {33} } } \right)\,,
\]
where $\omega=\exp{(2i\pi/3)}$ is a primitive cube root of unity. Note that $\lambda_1(t)$, $\lambda_2(t)$ and $\lambda_3(t)$ are integers for any positive integer $t$ \cite{irmak13}; in particular, $\lambda_1(1)=1=\lambda_2(1)=\lambda_3(1)$.

\section{Weighted sums}
\begin{lemma}[\cite{adegoke18}, Lemma 2]\label{lem.s9jfs7n}
Let $\{X_m\}$ be any arbitrary sequence, where $X_m$, $m\in\Z$, satisfies a second order recurrence relation $X_m=f_1X_{m-a}+f_2X_{m-b}$, where $f_1$ and $f_2$ are arbitrary non-vanishing complex functions, not dependent on $m$, and $a$ and $b$ are integers. Then,
\begin{equation}\label{eq.mf_1f_2b9zk}
f_2\sum_{j = 0}^k {\frac{{X_{m - ka  - b  + a j} }}{{f_1^j }}}  = \frac{{X_m }}{{f_1^k }} - f_1X_{m - (k + 1)a }\,,
\end{equation}
\begin{equation}\label{eq.cgldajj}
f_1\sum_{j = 0}^k {\frac{{X_{m - kb  - a  + b j} }}{{f_2^j }}}  = \frac{{X_m }}{{f_2^k }} - f_2X_{m - (k + 1)b } 
\end{equation}
and
\begin{equation}\label{eq.n2n4ec3}
\sum_{j = 0}^k { \frac{X_{m - (b - a)k + a + (b - a)j}}{(-f_2/f_1)^j} }  = \frac{f_1 X_m}{(-f_2/f_1)^k}  + f_2 X_{m - (k + 1)(b - a)}\,.
\end{equation}
for $k$ a non-negative integer.
\end{lemma}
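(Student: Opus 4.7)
The plan is to prove all three identities by a common telescoping mechanism: rewrite the defining recurrence so that one of its terms is expressed as a difference of two consecutive values of some index-dependent function, divide by the appropriate geometric factor, and sum from $j=0$ to $k$.

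For \eqref{eq.mf_1f_2b9zk} I would start from the rearrangement
\[
f_2 X_{m-b} = X_m - f_1 X_{m-a},
\]
and substitute $m \mapsto m - ka + aj$. Dividing through by $f_1^j$ and setting $g(j) := X_{m - ka + aj}/f_1^j$, the right-hand side becomes $g(j) - g(j-1)$. Summing for $j=0,1,\dots,k$ telescopes to $g(k) - g(-1) = X_m/f_1^k - f_1 X_{m-(k+1)a}$, which is precisely the claim. Identity \eqref{eq.cgldajj} then follows from the mirror rearrangement $f_1 X_{m-a} = X_m - f_2 X_{m-b}$ by the same argument, with the roles of $(a,f_1)$ and $(b,f_2)$ interchanged.

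The only identity that requires a small trick is \eqref{eq.n2n4ec3}. Setting $c := -f_2/f_1$, $s := b-a$, and $q_j := m - sk + a + sj$, the key observation is that $q_j - b = q_{j-1} - a$ (since $a-b = -s$). Applying the original recurrence at index $q_j$ gives
\[
X_{q_j} = f_1 X_{q_j - a} + f_2 X_{q_{j-1} - a}.
\]
Dividing by $c^j$ and using $f_2/c = -f_1$, the second term equals $-f_1 X_{q_{j-1}-a}/c^{j-1}$, so
\[
\frac{X_{q_j}}{c^j} = h(j) - h(j-1), \qquad h(j) := \frac{f_1 X_{q_j - a}}{c^j}.
\]
Summing over $j$ telescopes the right-hand side to $h(k) - h(-1)$; since $q_k - a = m$, $q_{-1} - a = m - (k+1)s$, and $cf_1 = -f_2$, this evaluates to $f_1 X_m/c^k + f_2 X_{m - (k+1)s}$, matching \eqref{eq.n2n4ec3}.

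The main obstacle is the index bookkeeping in \eqref{eq.n2n4ec3}: spotting the symmetry $q_j - b = q_{j-1} - a$ that enables the telescoping, and correctly evaluating the $j=-1$ endpoint using $cf_1=-f_2$. The other two identities require only the standard \emph{recurrence-as-difference plus geometric weighting} template, and no additional ideas beyond careful substitution.
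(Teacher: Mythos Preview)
Your telescoping argument is correct in all three cases; the index bookkeeping checks out, including the key relation $q_j-b=q_{j-1}-a$ and the endpoint evaluation $h(-1)=-f_2X_{m-(k+1)(b-a)}$ in~\eqref{eq.n2n4ec3}. The paper does not actually prove this lemma at all---it merely quotes it from~\cite{adegoke18}, Lemma~2---so your write-up supplies a self-contained proof where the paper defers to a citation. The telescoping mechanism you use is exactly the standard one behind such weighted-sum identities, and is almost certainly what the cited reference does as well (the ``recurrence-as-difference plus geometric weight'' template is the natural route and there is no essentially different approach to these formulas).
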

\begin{thm}\label{thm.mt8vm2p}
The following identities hold for any integers $m$ and $k$:
\begin{equation}\label{eq.ppp3zo9}
\sum_{j = 0}^k {2^{ - j} T_{m - k - 4 + j} }  = 2T_{m - k - 1}  - 2^{ - k} T_m\,, 
\end{equation}
\begin{equation}\label{eq.q2makc7}
2\sum_{j = 0}^k {( - 1)^j T_{m - 4k - 1 + 4j} }  = ( - 1)^k T_m  + T_{m - 4k - 4} 
\end{equation}
and
\begin{equation}\label{eq.lgpm86c}
\sum_{j = 0}^k {2^j T_{m - 3k + 1 + 3j} }  = 2^{k + 1} T_m  - T_{m - 3k - 3}\,. 
\end{equation}

\end{thm}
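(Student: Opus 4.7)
The plan is to observe that although the Tribonacci recurrence \eqref{eq.ffnh4ol} is third-order, the alternative recurrence \eqref{eq.o6ns6qw}, namely $T_m = 2T_{m-1} - T_{m-4}$, expresses $T_m$ in terms of only two earlier Tribonacci numbers. Thus the Tribonacci sequence fits exactly into the hypothesis of Lemma \ref{lem.s9jfs7n} with the parameter choice
\[
f_1 = 2,\qquad f_2 = -1,\qquad a = 1,\qquad b = 4.
\]
Each of the three identities of the theorem should then correspond to one of the three formulas of the lemma under this substitution.

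For \eqref{eq.ppp3zo9}, I would substitute the above parameters into \eqref{eq.mf_1f_2b9zk}; the left-hand side acquires an overall factor of $f_2 = -1$, which moves to the right after multiplication by $-1$, producing exactly the stated identity after noting that $m - ka - b + aj = m - k - 4 + j$ and $m - (k+1)a = m - k - 1$.

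For \eqref{eq.q2makc7}, I would substitute into \eqref{eq.cgldajj}; here $m - kb - a + bj = m - 4k - 1 + 4j$ and $m - (k+1)b = m - 4k - 4$. The small caveat to handle is that the denominators $f_2^j = (-1)^j$ satisfy $(-1)^{-j} = (-1)^j$, so the alternating factors appear in the numerator with the correct sign. For \eqref{eq.lgpm86c}, I would use \eqref{eq.n2n4ec3}; with $b - a = 3$ and $-f_2/f_1 = 1/2$, the shift $m - (b-a)k + a + (b-a)j$ becomes $m - 3k + 1 + 3j$, the denominator $(1/2)^j$ flips to give the weight $2^j$, and the factor $f_1/(1/2)^k = 2 \cdot 2^k = 2^{k+1}$ produces the leading term on the right-hand side.

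I do not anticipate a genuine obstacle: once the match between the recurrence \eqref{eq.o6ns6qw} and the template of Lemma \ref{lem.s9jfs7n} is made, the proofs are purely bookkeeping of exponents and signs. The only care needed is the index arithmetic and the observation that negative powers of $-1$ coincide with positive powers, which keeps the alternating signs in \eqref{eq.q2makc7} on the correct side of the equation.
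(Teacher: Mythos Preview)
Your proposal is correct and follows exactly the same approach as the paper: identify $f_1=2$, $f_2=-1$, $a=1$, $b=4$ from the recurrence $T_m=2T_{m-1}-T_{m-4}$ and substitute into the three formulas of Lemma~\ref{lem.s9jfs7n}. The bookkeeping you outline (the sign flip from $f_2=-1$ in \eqref{eq.ppp3zo9}, the self-inverse property of $(-1)^j$ in \eqref{eq.q2makc7}, and $-f_2/f_1=1/2$ in \eqref{eq.lgpm86c}) is accurate.
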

\begin{proof}
From the recurrence relation~\eqref{eq.o6ns6qw}, make the identifications $f_1=2$, $f_2=-1$, $a=1$ and $b=4$ and use these in Lemma~\ref{lem.s9jfs7n} with $X=T$.
\end{proof}
Particular instances of identities~\eqref{eq.ppp3zo9}--\eqref{eq.lgpm86c} are the following identities:
\begin{equation}
\sum_{j = 0}^k {2^{ - j} T_j }  = 4 - 2^{ - k} T_{k + 4}\,,
\end{equation}
giving,
\begin{equation}
\sum_{j = 0}^\infty  {2^{ - j} T_j }  = 4\,,
\end{equation}
and
\begin{equation}
2\sum_{j = 0}^k {( - 1)^j T_{4j} }  = ( - 1)^k T_{4k + 1}  - 1
\end{equation}
and
\begin{equation}
\sum_{j = 0}^k {2^j T_{3j} }  = 2^{k + 1} T_{3k - 1}\,.
\end{equation}
\begin{lemma}[Partial sum of an $n^{th}$ order sequence]\label{lemma.qm8k37h}
Let $\{X_j\}$ be any arbitrary sequence, where $X_j$, $j\in\Z$, satisfies a $n^{th}$~order recurrence relation $X_j=f_1X_{j-c_1}+f_2X_{j-c_2}+\cdots+f_nX_{j-c_n}=\sum_{m=1}^n f_mX_{j-c_m}$, where $f_1$, $f_2$, $\ldots$, $f_n$ are arbitrary non-vanishing complex functions, not dependent on $j$, and $c_1$, $c_2$, $\ldots$, $c_n$ are fixed integers. Then, the following summation identity holds for arbitrary $x$ and non-negative integer $k$ :
\[
\sum_{j = 0}^k {x^j X_j }  = \frac{{\sum_{m = 1}^n {\left\{ {x^{c_m } f_m \left( {\sum_{j = 1}^{c_m } {x^{ - j} X_{ - j} }  - \sum_{j = k - c_m  + 1}^k {x^j X_j } } \right)} \right\}} }}{{1 - \sum_{m = 1}^n {x^{c_m } f_m } }}\,.
\]

\end{lemma}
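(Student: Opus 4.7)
The plan is to denote the desired sum by $S=\sum_{j=0}^{k}x^{j}X_{j}$ and to derive a linear equation for $S$ by using the recurrence once. After rearranging, $S$ will be expressible as the stated closed form. The whole argument is bookkeeping on index shifts; no induction is needed.

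First I would substitute the $n$-term recurrence $X_{j}=\sum_{m=1}^{n}f_{m}X_{j-c_{m}}$ directly into $S$ and exchange the order of summation to obtain
\[
S=\sum_{m=1}^{n}f_{m}\sum_{j=0}^{k}x^{j}X_{j-c_{m}}.
\]
In the inner sum I would perform the substitution $i=j-c_{m}$, pulling out a factor $x^{c_{m}}$, which gives
\[
\sum_{j=0}^{k}x^{j}X_{j-c_{m}}=x^{c_{m}}\sum_{i=-c_{m}}^{k-c_{m}}x^{i}X_{i}.
\]

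The key step is to decompose the range $[-c_{m},k-c_{m}]$ into the three consecutive intervals $[-c_{m},-1]$, $[0,k]$ and (by subtraction) $[k-c_{m}+1,k]$, so that
\[
\sum_{i=-c_{m}}^{k-c_{m}}x^{i}X_{i}=\sum_{i=-c_{m}}^{-1}x^{i}X_{i}+S-\sum_{i=k-c_{m}+1}^{k}x^{i}X_{i}.
\]
Relabelling $j=-i$ in the first of these three sums turns it into $\sum_{j=1}^{c_{m}}x^{-j}X_{-j}$, matching the form stated in the lemma. Feeding this back into the expression for $S$ yields
\[
S=\Bigl(\sum_{m=1}^{n}x^{c_{m}}f_{m}\Bigr)S+\sum_{m=1}^{n}x^{c_{m}}f_{m}\Bigl(\sum_{j=1}^{c_{m}}x^{-j}X_{-j}-\sum_{j=k-c_{m}+1}^{k}x^{j}X_{j}\Bigr).
\]
Solving this linear equation for $S$, which is legal as long as the denominator $1-\sum_{m=1}^{n}x^{c_{m}}f_{m}$ is nonzero (i.e.\ $x$ is not a reciprocal root of the characteristic polynomial written as $1-\sum_{m}f_{m}z^{c_{m}}$), delivers the claimed identity.

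The only delicate point is the index juggling in the three-way splitting of the shifted sum, together with the implicit hypothesis $c_{m}\ge 1$ needed to make $[-c_{m},-1]$ and $[k-c_{m}+1,k]$ nonempty intervals of integers; under that mild convention the computation is essentially mechanical, so I do not anticipate a substantive obstacle.
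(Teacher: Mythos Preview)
Your proposal is correct and follows essentially the same route as the paper: substitute the recurrence into $S$, shift the index by $c_m$, split the resulting range into the negative tail, the central block $[0,k]$, and the overshoot at the top, then solve the resulting linear equation for $S$. The paper phrases the top piece as the ``reversed'' sum $\sum_{j=k+1}^{k-c_m}x^jX_j=-\sum_{j=k-c_m+1}^{k}x^jX_j$, whereas you subtract that block directly, but this is purely cosmetic; your added remarks on the nonvanishing of the denominator and the implicit assumption $c_m\ge 1$ are apt and not spelled out in the paper.
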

\begin{proof}
Recurrence relation:
\[
X_j=\sum_{m=1}^n f_mX_{j-c_m}\,.
\]
We multiply both sides by $x^j$ and sum over $j$ to obtain
\[
\sum_{j = 0}^k {x^j X_j }  = \sum_{m = 1}^n {\left( {f_m \sum_{j = 0}^k {x^j X_{j - c_m } } } \right)}  = \sum_{m = 1}^n {\left( {x^{c_m } f_m \sum_{j =  - c_m }^{k - c_m } {x^j X_j } } \right)}\,, 
\]
after shifting the summation index $j$.
Splitting the inner sum, we can write
\[
\sum_{j = 0}^k {x^j X_j }  = \sum_{m = 1}^n {x^{c_m } f_m \left( {\sum_{j =  - c_m }^{ - 1} {x^j X_j }  + \sum_{j = 0}^k {x^j X_j }  + \sum_{j = k + 1}^{k - c_m } {x^j X_j } } \right)}\,.
\]
Since
\[
\sum_{j =  - c_m }^{ - 1} {x^j X_j }  \equiv \sum_{j = 1}^{c_m } {x^{ - j} X_{ - j} }\mbox{ and}\quad\sum_{j = k + 1}^{k - c_m } {x^j X_j }  \equiv  - \sum_{j = k - c_m  + 1}^k {x^j X_j }\,,
\]
the preceding identity can be written
\[
\sum_{j = 0}^k {x^j X_j }  = \sum_{m = 1}^n {x^{c_m } f_m \left( {\sum_{j = 1}^{c_m } {x^{ - j} X_{ - j} }  + \sum_{j = 0}^k {x^j X_j }  - \sum_{j = k - c_m  + 1}^k {x^j X_j } } \right)}\,.
\]
Thus, we have
\[
S = \sum_{m = 1}^n {x^{c_m } f_m \left( {\sum_{j = 1}^{c_m } {x^{ - j} X_{ - j} }  + S - \sum_{j = k - c_m  + 1}^k {x^j X_j } } \right)}\,, 
\]
where 
\[
S = S_k (x) = \sum_{j = 0}^k {x^j X_j }\,.
\]
Removing brackets, we have
\[
S = \sum_{m = 1}^n {x^{c_m } f_m \left( {\sum_{j = 1}^{c_m } {x^{ - j} X_{ - j} }  - \sum_{j = k - c_m  + 1}^k {x^j X_j } } \right)}  + S\sum_{m = 1}^n {x^{c_m } f_m }\,,
\]
from which the result follows by grouping the $S$ terms.
\end{proof}
\begin{lemma}[Generating function]\label{lemma.v1j9biq}
Under the conditions of Lemma~\ref{lemma.qm8k37h}, if additionally $x^kX_k$ vanishes in the limit as $k$ approaches infinity, then
\[
S_\infty  (x) = \sum_{j = 0}^\infty  {x^j X_j }  = \frac{{\sum_{m = 1}^n {\left( {x^{c_m } f_m \sum_{j = 1}^{c_m } {x^{ - j} X_{ - j} } } \right)} }}{{1 - \sum_{m = 1}^n {x^{c_m } f_m } }}\,,
\]
so that $S_\infty(x)$ is a generating function for the sequence $\{X_j\}$.
\end{lemma}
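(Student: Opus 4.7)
The plan is to derive the generating function formula by taking the limit $k\to\infty$ of the identity established in Lemma~\ref{lemma.qm8k37h}. Writing that result as
\[
S_k(x) = \frac{\sum_{m=1}^n x^{c_m} f_m\left(\sum_{j=1}^{c_m} x^{-j} X_{-j} - \sum_{j=k-c_m+1}^k x^j X_j\right)}{1 - \sum_{m=1}^n x^{c_m} f_m},
\]
I observe that both the denominator and each boundary sum $\sum_{j=1}^{c_m} x^{-j} X_{-j}$ are independent of $k$. The only $k$-dependent pieces are the tail sums $\sum_{j=k-c_m+1}^k x^j X_j$, so controlling these will finish the argument.

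Next, I would show that every such tail sum vanishes as $k\to\infty$. Because $c_m$ is a fixed positive integer, each tail sum contains exactly $c_m$ terms of the shape $x^{k-c_m+i} X_{k-c_m+i}$ with $i=1,\ldots,c_m$. The hypothesis $x^k X_k\to 0$ as $k\to\infty$, applied after a bounded index shift, forces each such term to $0$, and a finite sum of sequences tending to $0$ also tends to $0$.

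Passing to the limit in the displayed identity, the right-hand side converges to the expression claimed for $S_\infty(x)$, which in turn forces $S_k(x)$ to converge, and by definition $\lim_{k\to\infty} S_k(x) = \sum_{j=0}^\infty x^j X_j$. This establishes the formula and, as a by-product, the convergence of the series. I expect no real obstacle: the only step requiring any attention is the bookkeeping confirming that each tail sum involves a \emph{fixed} number of terms, not a number growing with $k$, which is precisely what makes the termwise decay $x^k X_k\to 0$ sufficient to annihilate the entire sum.
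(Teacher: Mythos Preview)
Your argument is correct and is exactly the approach the paper intends: Lemma~\ref{lemma.v1j9biq} is stated without an explicit proof because it is meant to follow immediately from Lemma~\ref{lemma.qm8k37h} by letting $k\to\infty$ and noting that each tail sum $\sum_{j=k-c_m+1}^{k} x^j X_j$ has a fixed number $c_m$ of terms, each tending to zero under the hypothesis $x^k X_k\to 0$. Your write-up simply makes this passage to the limit explicit.
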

\begin{thm}[Sum of Tribonacci numbers with indices in arithmetic progression]\label{thm.k1j38a4}
For arbitrary $x$, any integers $t$ and $r$ and any non-negative integer $k$, the following identity holds:
\[
\begin{split}
\left(1 - \lambda_1(t)x - \lambda_2(t)x^2  - \lambda_3(t)x^3 \right)\sum_{j = 0}^k {x^j T_{tj + r} }  &= T_r  + (x\lambda_2(t)  + x^2 \lambda_3(t) )T_{r - t}\\
&\qquad + x\lambda_3(t) T_{r - 2t}- x^{k + 1} T_{(k + 1)t + r}\\ 
&\qquad - x^{k + 2} (\lambda_2(t)  + x\lambda_3(t) )T_{kt + r}\\ 
&\qquad- x^{k + 2} \lambda_3(t) T_{(k - 1)t + r}\,,
\end{split}
\]
where,
\[
\lambda_1(t)  = \alpha ^t  + \beta ^t  + \gamma ^t ,\quad \lambda_2(t)  = -(\alpha \beta )^t  - (\alpha \gamma )^t  - (\beta \gamma )^t ,\quad \lambda_3(t)  = (\alpha \beta \gamma )^t\,,
\]
where $\alpha$, $\beta$ and $\gamma$ are the roots of the characteristic polynomial of the Tribonacci sequence \mbox{$x^3-x^2-x-1$}.

\end{thm}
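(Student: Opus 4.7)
The plan is to apply Lemma~\ref{lemma.qm8k37h} directly to the sequence $X_j := T_{tj+r}$, $j\in\Z$. By \eqref{eq.yssh328} this sequence satisfies the third-order recurrence $X_j = \lambda_1(t) X_{j-1} + \lambda_2(t) X_{j-2} + \lambda_3(t) X_{j-3}$, so in the notation of the lemma I would take $n=3$, $c_m = m$, $f_m = \lambda_m(t)$ for $m=1,2,3$, and note that $X_{-j} = T_{r-tj}$. The resulting denominator is then exactly $1 - \lambda_1(t) x - \lambda_2(t) x^2 - \lambda_3(t) x^3$, matching the prefactor on the left of the claim; clearing the denominator therefore reduces the theorem to verifying that the numerator produced by Lemma~\ref{lemma.qm8k37h} agrees with the right-hand side of the claim.

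That numerator splits naturally into an initial-conditions piece $\sum_{m=1}^3 x^m \lambda_m(t) \sum_{j=1}^m x^{-j} T_{r-tj}$ and an end-of-sum piece $-\sum_{m=1}^3 x^m \lambda_m(t) \sum_{j=k-m+1}^k x^j T_{tj+r}$. Expanding the first produces six monomials; regrouping by power of $x$, the $x^0$ terms are $\lambda_1(t) T_{r-t} + \lambda_2(t) T_{r-2t} + \lambda_3(t) T_{r-3t}$, which by \eqref{eq.yssh328} taken at $m=0$ collapse to $T_r$, while the $x^1$ and $x^2$ contributions reassemble as $(x\lambda_2(t) + x^2\lambda_3(t)) T_{r-t} + x\lambda_3(t) T_{r-2t}$. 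That accounts for the first three terms of the claimed right-hand side.

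The end-of-sum piece is handled in the same spirit. Its six monomials, grouped by power of $x$, yield at $x^{k+1}$ the coefficient $-[\lambda_1(t) T_{tk+r} + \lambda_2(t) T_{t(k-1)+r} + \lambda_3(t) T_{t(k-2)+r}]$, which by \eqref{eq.yssh328} applied at $m=k+1$ collapses to $-T_{(k+1)t+r}$; the $x^{k+2}$ and $x^{k+3}$ contributions combine to $-x^{k+2}(\lambda_2(t) + x\lambda_3(t)) T_{kt+r} - x^{k+2}\lambda_3(t) T_{(k-1)t+r}$. These supply the final three terms of the claim. The main obstacle is purely bookkeeping, namely tracking the six monomials in each batch and performing the two collapses via \eqref{eq.yssh328}; no new ingredient is needed beyond Lemma~\ref{lemma.qm8k37h} and the recurrence \eqref{eq.yssh328}.
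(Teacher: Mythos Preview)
Your proposal is correct and follows exactly the approach of the paper: apply Lemma~\ref{lemma.qm8k37h} to $X_j=T_{tj+r}$ with $n=3$, $c_m=m$, $f_m=\lambda_m(t)$, using the recurrence~\eqref{eq.yssh328}. You in fact supply more detail than the paper, explicitly carrying out the bookkeeping that collapses the $x^0$ and $x^{k+1}$ coefficients via~\eqref{eq.yssh328}.
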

\begin{proof}
Write identity~\eqref{eq.yssh328} as $X_j  = f_1 X_{j - 1}  + f_2 X_{j - 2}  + f_3 X_{j - 3} $ and identify the sequence $\{X_j\}=\{T_{tj+r}\}$ and the constants $c_1=1$, $c_2=2$, $c_3=3$ and the functions $f_1=\lambda_1(t)$, $f_2=\lambda_2(t)$, $f_3=\lambda_3(t)$, and use these in Lemma~\ref{lemma.qm8k37h}.

\end{proof}
\begin{cor}[Generating function of the Tribonacci numbers with indices in arithmetic progression]
For any integers $t$ and $r$, any non-negative integer $k$ and arbitrary $x$ for which $x^kT_k$ vanishes as $k$ approaches infinity , the following identity holds:
\[
\sum_{j = 0}^\infty  {x^j T_{tj + r} }  = \frac{{T_r  + (x\lambda_2  + x^2 \lambda_3 )T_{r - t}  + x\lambda_3 T_{r - 2t} }}{{1 - \lambda_1x - \lambda_2x^2  - \lambda_3x^3 }}\,,
\]
where,
\[
\lambda_1  = \alpha ^t  + \beta ^t  + \gamma ^t ,\quad \lambda_2  = -(\alpha \beta )^t  - (\alpha \gamma )^t  - (\beta \gamma )^t ,\quad \lambda_3  = (\alpha \beta \gamma )^t\,,
\]
where $\alpha$, $\beta$ and $\gamma$ are the roots of the characteristic polynomial of the Tribonacci sequence \mbox{$x^3-x^2-x-1$}.
\end{cor}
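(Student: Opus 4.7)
The plan is to obtain the corollary as the $k\to\infty$ limit of Theorem~\ref{thm.k1j38a4}, or equivalently as a direct specialization of Lemma~\ref{lemma.v1j9biq}. Under the second reading, I would set $\{X_j\}=\{T_{tj+r}\}$, which by identity~\eqref{eq.yssh328} satisfies the third-order recurrence $X_j=\lambda_1(t)X_{j-1}+\lambda_2(t)X_{j-2}+\lambda_3(t)X_{j-3}$, and identify $n=3$, $c_m=m$, $f_m=\lambda_m(t)$ in the hypotheses of that lemma.

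Starting from the finite-$k$ identity of Theorem~\ref{thm.k1j38a4}, the natural step is to divide both sides by $1-\lambda_1 x-\lambda_2 x^2-\lambda_3 x^3$ (nonzero in the intended convergence domain) and take $k\to\infty$. The three tail terms that must be shown to vanish are
\[
-x^{k+1}T_{(k+1)t+r},\qquad -x^{k+2}(\lambda_2+x\lambda_3)T_{kt+r},\qquad -x^{k+2}\lambda_3 T_{(k-1)t+r},
\]
each of which is, up to a bounded factor in $x$, of the form $x^{j}T_{tj+r}$ with $j\to\infty$. The hypothesis that $x^kT_k$ vanishes (read in the sense appropriate to this setting, namely $x^kT_{tk+r}\to 0$) therefore kills all three contributions simultaneously. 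What survives on the right-hand side is exactly the expression $T_r+(x\lambda_2+x^2\lambda_3)T_{r-t}+x\lambda_3 T_{r-2t}$, and a single further division by $1-\lambda_1 x-\lambda_2 x^2-\lambda_3 x^3$ yields the claimed generating-function formula.

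Alternatively, applying Lemma~\ref{lemma.v1j9biq} directly produces a numerator of the form $\sum_{m=1}^{3}x^{c_m}\lambda_m(t)\sum_{j=1}^{c_m}x^{-j}T_{-tj+r}$, and the only nontrivial bookkeeping is to collapse this sum to the compact numerator in the statement. Carrying out the expansion gives $(\lambda_1+x\lambda_2+x^2\lambda_3)T_{r-t}+(\lambda_2+x\lambda_3)T_{r-2t}+\lambda_3 T_{r-3t}$, and reducing this to $T_r+(x\lambda_2+x^2\lambda_3)T_{r-t}+x\lambda_3 T_{r-2t}$ uses the recurrence~\eqref{eq.yssh328} at $m=0$, which states $T_r=\lambda_1 T_{r-t}+\lambda_2 T_{r-2t}+\lambda_3 T_{r-3t}$. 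The main (mild) obstacle is therefore not computational but rather the convergence issue: one must verify that for $|x|$ small enough, typically $|x|<|\alpha|^{-t}$ with $\alpha$ the Tribonacci constant, the geometric decay of the $x^{k}$ factors dominates the growth of the Tribonacci terms so that the three tail pieces really do vanish; this justifies passing to the limit and hence the corollary.
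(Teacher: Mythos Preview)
Your proposal is correct and follows the paper's intended route: the corollary is stated without a separate proof precisely because it is the $k\to\infty$ limit of Theorem~\ref{thm.k1j38a4} (equivalently, a direct instance of Lemma~\ref{lemma.v1j9biq} with $X_j=T_{tj+r}$), and you have identified both readings and carried out the only nontrivial step, the collapse of the Lemma~\ref{lemma.v1j9biq} numerator via the recurrence~\eqref{eq.yssh328} at $m=0$. Your remark that the hypothesis should really be $x^kT_{tk+r}\to 0$ rather than $x^kT_k\to 0$ is also well taken.
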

Many instances of Theorem~\ref{thm.k1j38a4} may be explored. In particular, we have
\begin{equation}\label{eq.wpt4d3q}
\begin{split}
(\lambda_1(t)+\lambda_2(t)+\lambda_3(t)-1)\sum_{j = 0}^k {T_{tj + r} } &=  - T_r  - (\lambda_2(t)  + \lambda_3(t) )T_{r - t}\\
&\quad  - \lambda_3(t) T_{r - 2t}  + T_{(k + 1)t + r}\\
&\qquad+ (\lambda_2(t)  + \lambda_3(t) )T_{kt + r}  + \lambda_3(t) T_{(k - 1)t + r}\,,
\end{split}
\end{equation}
which at $r=0$ gives
\begin{equation}\label{eq.ai4p8m1}
\begin{split}
(\lambda_1(t)+\lambda_2(t)+\lambda_3(t)-1)\sum_{j = 0}^k {T_{tj} }  &=  - (\lambda_2(t)  + \lambda_3(t) )(T_{t - 1} ^2  - T_{t - 2} T_t )\\
&\quad - \lambda_3(t) (T_{2t - 1} ^2  - T_{2t - 2} T_{2t} )+ T_{(k + 1)t}\\
&\qquad+ (\lambda_2(t)  + \lambda_3(t) )T_{kt}  + \lambda_3(t) T_{(k - 1)t}\;;
\end{split}
\end{equation}
and
\begin{equation}\label{eq.qswmqj2}
\begin{split}
(1+\lambda_1(t)-\lambda_2(t)+\lambda_3(t))\sum_{j = 0}^k {(-1)^jT_{tj + r} }  &= T_r  + (\lambda_3(t)  - \lambda_2(t) )T_{r - t}\\
&\quad  - \lambda_3(t) T_{r - 2t}  + ( - 1)^k T_{(k + 1)t + r}\\ 
&\qquad + ( - 1)^k (\lambda_3(t) - \lambda_2(t) )T_{kt + r}\\
&\qquad  - ( - 1)^k \lambda_3(t) T_{(k - 1)t + r}\,,
\end{split}
\end{equation}
which at $r=0$ gives
\begin{equation}
\begin{split}
(1+\lambda_1(t)-\lambda_2(t)+\lambda_3(t))\sum_{j = 0}^k {(-1)^jT_{tj} } &= (\lambda_3(t)  - \lambda_2(t) )(T_{t - 1} ^2  - T_{t - 2} T_t )\\
&\qquad - \lambda_3(t) (T_{2t - 1} ^2  - T_{2t - 2} T_{2t} )+ ( - 1)^k T_{(k + 1)t}\\
&\qquad + ( - 1)^k (\lambda_3(t)  - \lambda_2(t) )T_{kt}\\
&\qquad  - ( - 1)^k \lambda_3(t) T_{(k - 1)t}\,. 
\end{split}
\end{equation}
Many previously known results are particular instances of the identities~\eqref{eq.wpt4d3q} and~\eqref{eq.qswmqj2}. For example, Theorem~5 of~\cite{kilic08} is obtained from identity~\eqref{eq.ai4p8m1} by setting $t=4$. Sums of Tribonacci numbers with indices in arithmetic progression are also discussed in references~\cite{frontczak18, irmak13, kilic08} and references therein, using various techniques.

\bigskip

Weighted sums of the form $\sum_{j=0}^k{j^pT_{tj+r}}$, where $p$ is a non-negative integer, may be evaluated by setting $x=e^y$ in the identity of~Theorem~\ref{thm.k1j38a4}, differentiating both sides $p$ times with respect to $y$ and then setting $y=0$. The simplest examples in this category are the following:
\begin{equation}
\begin{split}
2\sum_{j = 0}^k {jT_{j + r} }  &= - T_{r - 2} + 3T_{r + 1}  + (k - 1)T_{k + r - 1}\\
&\qquad  + (2k - 1)T_{k + r}  + (k - 2)T_{k + r + 1}
\end{split}
\end{equation}
and
\begin{equation}
\begin{split}
2\sum_{j = 0}^k {j^2 T_{j + r} }  &=- 3T_{r - 1}  - 5T_r - 6T_{r + 1}\\
&\quad + (k^2  - 2k + 3)T_{k + r - 1}+(2k^2  - 2k + 5)T_{k + r}\\ 
&\qquad + (k^2  - 4k + 6)T_{k + r + 1}\,,
\end{split}
\end{equation}
with the particular cases
\begin{equation}
2\sum_{j = 0}^k {jT_j }  = 2 + (k - 1)T_{k - 1}  + (2k - 1)T_k  + (k - 2)T_{k + 1}
\end{equation}
and
\begin{equation}
\begin{split}
2\sum_{j = 0}^k {j^2 T_{j} }  &= - 6 + (k^2  - 2k + 3)T_{k + r - 1}\\
&\quad +(2k^2  - 2k + 5)T_{k}\\ 
&\qquad + (k^2  - 4k + 6)T_{k + 1}\,.
\end{split}
\end{equation}

\section{Weighted binomial sums}
\begin{lemma}[\cite{adegoke18}, Lemma 3]\label{lem.i84yg3s}
Let $\{X_m\}$ be any arbitrary sequence. Let $X_m$, $m\in\Z$, satisfy a second order recurrence relation $X_m=f_1X_{m-a}+f_2X_{m-b}$, where $f_1$ and $f_2$ are non-vanishing complex functions, not dependent on $m$, and $a$ and $b$ are integers. Then,
\begin{equation}
\sum_{j = 0}^k {\binom kj\left( {\frac{f_1}{f_2}} \right)^j X_{m - bk  + (b  - a )j} }  = \frac{{X_m }}{{f_2^k }}\,,
\end{equation}
\begin{equation}
\sum_{j = 0}^k {\binom kj\frac{{X_{m + (a - b)k + bj} }}{{( - f_2 )^j }}}  = \left( { - \frac{{f_1 }}{{f_2 }}} \right)^k X_m
\end{equation}
and
\begin{equation}\label{eq.fnwrzi3}
\sum_{j = 0}^k {\binom kj\frac{{X_{m + (b - a)k + a j} }}{{( - f_1 )^j }}}  = \left( { - \frac{f_2}{f_1}} \right)^k X_m\,,
\end{equation}
for $k$ a non-negative integer.

\end{lemma}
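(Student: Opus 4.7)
The strategy is to interpret the recurrence in operator form and then expand via the binomial theorem. Let $E$ denote the shift operator defined by $(EX)_m = X_{m+1}$, so that $E^{-c} X_m = X_{m-c}$. The hypothesis
\[
X_m = f_1 X_{m-a} + f_2 X_{m-b}
\]
is exactly the assertion that the operator $f_1 E^{-a} + f_2 E^{-b}$ acts as the identity on the sequence $X$. Transposing one term at a time yields the two companion relations
\[
(I - f_2 E^{-b}) X_m = f_1 E^{-a} X_m, \qquad (I - f_1 E^{-a}) X_m = f_2 E^{-b} X_m.
\]

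For the first stated identity, I would iterate $(f_1 E^{-a} + f_2 E^{-b}) X_m = X_m$ to obtain $(f_1 E^{-a} + f_2 E^{-b})^k X_m = X_m$; since $E^{-a}$ and $E^{-b}$ commute, the binomial theorem gives
\[
\sum_{j=0}^k \binom{k}{j} f_1^{k-j} f_2^j X_{m - a(k-j) - bj} = X_m.
\]
Reindexing $j \mapsto k-j$ and dividing by $f_2^k$ reproduces the displayed form $\sum_{j=0}^k \binom{k}{j}(f_1/f_2)^j X_{m - bk + (b-a)j} = X_m/f_2^k$. For the second and third identities, I would iterate the corresponding transposed relations to produce $(I - f_2 E^{-b})^k X_m = f_1^k X_{m-ak}$ and $(I - f_1 E^{-a})^k X_m = f_2^k X_{m-bk}$, each verified by a short induction in which the single-step identity is applied to the already-shifted argument. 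Expanding the left side by the binomial theorem, shifting $m$ by $+ak$ or $+bk$, and then applying the index flip $j \mapsto k-j$ brings each sum into the advertised form with shift $(a-b)k+bj$ or $(b-a)k+aj$, and the denominator $(-f_2)^k$ or $(-f_1)^k$ combines with the $f_1^k$ or $f_2^k$ on the right to yield precisely $(-f_1/f_2)^k$ or $(-f_2/f_1)^k$.

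The main obstacle is purely notational: keeping the various index shifts and sign factors synchronised with the displayed indices, since each identity is written in a slightly different canonical form. A parallel route is a direct induction on $k$ using Pascal's rule $\binom{k}{j} + \binom{k}{j-1} = \binom{k+1}{j}$, started from the rewritten recurrence $X_m/f_2 = X_{m-b} + (f_1/f_2)X_{m-a}$ for the first identity and the obvious analogues for the other two. That route duplicates the same index bookkeeping as the operator calculation but without the unifying structural observation that all three identities are simply the three factorisations $I = f_1 E^{-a} + f_2 E^{-b}$, $I - f_2 E^{-b} = f_1 E^{-a}$, and $I - f_1 E^{-a} = f_2 E^{-b}$, each raised to the $k$th power.
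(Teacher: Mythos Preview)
Your argument is correct. The operator reformulation is clean: the identity $(f_1E^{-a}+f_2E^{-b})X=X$ iterated $k$ times and binomially expanded gives the first formula directly, and the two transposed forms $I-f_2E^{-b}=f_1E^{-a}$ and $I-f_1E^{-a}=f_2E^{-b}$ (acting on $X$) likewise yield the second and third after the shift and index flip you describe. The intermediate steps all check.

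As for comparison: the paper does not actually prove this lemma; it is imported verbatim from~\cite{adegoke18}. The only hint the present paper gives about the original method is in the proof of Lemma~\ref{lem.h2de9i7}, where it says the analogous triple-sum identity is proved ``by induction on $k$, similar to the proof of Lemma~3 of~\cite{adegoke18}''. That is precisely the ``parallel route'' you sketch in your final paragraph using Pascal's rule. So the cited proof and your proof are not genuinely different in content: your operator calculation is the induction on $k$ carried out once and for all at the level of the shift algebra, rather than unwound termwise. What your version buys is a uniform explanation of why there are exactly three identities (one per additive rearrangement of the recurrence) and why the binomial pattern appears; what the bare induction buys is that it avoids any appeal to operator language and stays entirely at the level of sequence entries.
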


\begin{thm} 
The following identities hold for any integer $m$ and any non-negative integer~$k$:
\begin{equation}\label{eq.c0qtir4}
\sum_{j = 0}^k {( - 1)^j \binom kj2^j T_{m - 4k + 3j} }  = ( - 1)^k T_m\,,
\end{equation}
\begin{equation}\label{eq.er5gj6p}
\sum_{j = 0}^k {\binom kjT_{m - 3k + 4j} }  = 2^k T_m
\end{equation}
and
\begin{equation}\label{eq.f2qpycg}
\sum_{j = 0}^k {(-1)^j\binom kj2^{-j}T_{m+3k+j}}   = 2^{-k} T_m\,. 
\end{equation}

\end{thm}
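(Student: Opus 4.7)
The three identities have exactly the same shape as the three parts of Lemma~\ref{lem.i84yg3s}, so my plan is to apply that lemma directly to the sequence $X_m=T_m$ using the alternative second-order recurrence~\eqref{eq.o6ns6qw}, namely $T_m=2T_{m-1}-T_{m-4}$. This fits the hypothesis $X_m=f_1X_{m-a}+f_2X_{m-b}$ of Lemma~\ref{lem.i84yg3s} with the identifications $f_1=2$, $f_2=-1$, $a=1$, $b=4$. Once this bookkeeping is set, each of \eqref{eq.c0qtir4}, \eqref{eq.er5gj6p}, \eqref{eq.f2qpycg} should drop out of the corresponding clause of the lemma.

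Concretely, for \eqref{eq.c0qtir4} I would use the first identity of Lemma~\ref{lem.i84yg3s}: the weight becomes $(f_1/f_2)^j=(-2)^j=(-1)^j2^j$, the index becomes $m-bk+(b-a)j=m-4k+3j$, and the right-hand side $X_m/f_2^k$ becomes $(-1)^kT_m$. For \eqref{eq.er5gj6p} I would use the second identity: the denominator $(-f_2)^j=1^j$ disappears, the index becomes $m+(a-b)k+bj=m-3k+4j$, and $(-f_1/f_2)^kX_m$ becomes $2^kT_m$. For \eqref{eq.f2qpycg} I would use the third identity: the weight $1/(-f_1)^j$ becomes $(-1)^j2^{-j}$, the index becomes $m+(b-a)k+aj=m+3k+j$, and $(-f_2/f_1)^kX_m$ becomes $2^{-k}T_m$.

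I do not anticipate any obstacle. The substance of the argument has already been absorbed into Lemma~\ref{lem.i84yg3s}; what remains is purely the substitution $f_1=2$, $f_2=-1$, $a=1$, $b=4$ and a mild simplification of signs and powers of two. The only thing to guard against is a sign slip when rewriting $(-2)^j$ as $(-1)^j2^j$ and $(-1)/(-1)^k$ as $(-1)^k$; as a sanity check I would verify the $k=0$ case (each identity degenerates to $T_m=T_m$) and the $k=1$ case (each identity reduces, after an index shift, to the recurrence~\eqref{eq.o6ns6qw} itself).
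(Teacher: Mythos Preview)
Your proposal is correct and matches the paper's own proof essentially verbatim: the paper simply applies Lemma~\ref{lem.i84yg3s} with $X=T$ and the values $f_1=2$, $f_2=-1$, $a=1$, $b=4$ coming from the recurrence~\eqref{eq.o6ns6qw} (the same identifications used in the proof of Theorem~\ref{thm.mt8vm2p}). Your bookkeeping for each of the three identities is accurate, including the sign handling.
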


\begin{proof}
Identify $X=T$ in Lemma~\ref{lem.i84yg3s} and use the $f_1$, $f_2$, $a$ and $b$ values found in the proof of Theorem~\ref{thm.mt8vm2p}.
\end{proof}
Particular cases of ~\eqref{eq.c0qtir4},~\eqref{eq.er5gj6p} and \eqref{eq.f2qpycg} are the following identities:
\begin{equation}
\sum_{j = 0}^k {( - 1)^j \binom kj2^j T_{3j} }  = ( - 1)^k T_{4k}\,,
\end{equation}
\begin{equation}
\sum_{j = 0}^k {\binom kjT_{4j} }  = 2^k T_{3k}
\end{equation}
and
\begin{equation}
\sum_{j = 0}^k {( - 1)^j \binom kj2^{ - j} T_j }  = 2^{ - k} (T_{3k - 1}^2  - T_{3k - 2} T_{3k} )\,.
\end{equation}
\section{Weighted double binomial sums}
\begin{lemma}\label{lem.h2de9i7}
Let $\{X_m\}$ be any arbitrary sequence, $X_m$ satisfying a third order recurrence relation $X_m=f_1X_{m-a}+f_2X_{m-b}+f_3X_{m-c}$, where $f_1$, $f_2$ and $f_3$ are arbitrary nonvanishing functions and $a$, $b$ and $c$ are integers. Then, the following identities hold:
\begin{equation}\label{eq.wgx2r2f}
\sum_{j = 0}^k {\sum_{s = 0}^j {\binom kj\binom js\left( {\frac{{f_2 }}{{f_3 }}} \right)^j\left( {\frac{{f_1 }}{{f_2 }}} \right)^s  X_{m - ck + (c - b)j + (b - a)s} } }  = \frac{{X_m }}{{f_3{}^k }}\,,
\end{equation}
\begin{equation}\label{eq.sm9bygb}
\sum_{j = 0}^k {\sum_{s = 0}^j {\binom kj\binom js\left( {\frac{{f_3 }}{{f_2 }}} \right)^j\left( {\frac{{f_1 }}{{f_3 }}} \right)^s  X_{m - bk + (b - c)j + (c - a)s} } }  = \frac{{X_m }}{{f_2{}^k }}\,,
\end{equation}
\begin{equation}
\sum_{j = 0}^k {\sum_{s = 0}^j {\binom kj\binom js\left( {\frac{{f_3 }}{{f_1 }}} \right)^j\left( {\frac{{f_2 }}{{f_3 }}} \right)^s X_{m - ak + (a - c)j + (c - b)s} } }  = \frac{{X_m }}{{f_1{}^k }}\,,
\end{equation}
\begin{equation}
\sum_{j = 0}^k {\sum_{s = 0}^j {\binom kj\binom js\left( {\frac{{f_2 }}{{f_3 }}} \right)^j\left( {-\frac{{1 }}{{f_2 }}} \right)^s X_{m - (c-a)k + (c - b)j + bs } } }  = \left(-\frac {f_1}{f_3}\right)^kX_m\,,
\end{equation}
\begin{equation}
\sum_{j = 0}^k {\sum_{s = 0}^j {\binom kj\binom js\left( {\frac{{f_1 }}{{f_3 }}} \right)^j\left( {-\frac{{1 }}{{f_1 }}} \right)^s X_{m - (c-b)k + (c - a)j + as } } }  = \left(-\frac {f_2}{f_3}\right)^kX_m\,,
\end{equation}
and
\begin{equation}\label{eq.o7540wl}
\sum_{j = 0}^k {\sum_{s = 0}^j {\binom kj\binom js\left( {\frac{{f_1 }}{{f_2 }}} \right)^j\left( {-\frac{{1 }}{{f_1 }}} \right)^s X_{m  - (b-c)k + (b - a)j + as} } }  = \left(-\frac {f_3}{f_2}\right)^kX_m\,.
\end{equation}

\end{lemma}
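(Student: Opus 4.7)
The plan is to prove each of the six identities by iterating the third-order recurrence $k$ times and collecting the resulting terms via the trinomial theorem; equivalently, one may run an induction on $k$ in the spirit of Lemma~\ref{lem.i84yg3s}, applying Pascal's rule twice at the inductive step. I describe the plan in detail for \eqref{eq.wgx2r2f}; the other five identities follow from the same procedure applied to a different rearrangement of the recurrence.

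For \eqref{eq.wgx2r2f}, I would first divide the recurrence by $f_3$ to obtain
\[
\frac{X_m}{f_3} = \frac{f_1}{f_3}\, X_{m-a} + \frac{f_2}{f_3}\, X_{m-b} + X_{m-c}.
\]
Writing $E$ for the shift operator $E X_m = X_{m+1}$, this is the operator identity $1/f_3 = (f_1/f_3)E^{-a} + (f_2/f_3)E^{-b} + E^{-c}$ on the sequence $\{X_m\}$. Raising both sides to the $k$-th power and expanding by the trinomial theorem yields
\[
\frac{1}{f_3^k} = \sum_{\substack{i,u,v\ge 0\\ i+u+v=k}} \binom{k}{i,u,v} \left(\frac{f_1}{f_3}\right)^{\!i}\!\left(\frac{f_2}{f_3}\right)^{\!u} E^{-ai-bu-cv}.
\]
Applying this operator identity to $X_m$ and reparametrizing by $i=s$, $u=j-s$, $v=k-j$ (so that $0\le s\le j\le k$) converts the multinomial coefficient into $\binom{k}{j}\binom{j}{s}$, the prefactor into $(f_2/f_3)^j(f_1/f_2)^s$, and the shift exponent into $-ck+(c-b)j+(b-a)s$, which is exactly the left-hand side of \eqref{eq.wgx2r2f}.

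The remaining five identities are obtained by starting from a different rearrangement of the same recurrence and running the same trinomial-plus-reparametrization argument. Dividing the recurrence by $f_2$ or by $f_1$ produces \eqref{eq.sm9bygb} and the third identity, respectively. For the three signed identities whose right-hand sides carry $(-f_1/f_3)^k$, $(-f_2/f_3)^k$, or $(-f_3/f_2)^k$, one instead isolates $f_1 X_{m-a}$, $f_2 X_{m-b}$, or $f_3 X_{m-c}$ on one side of the recurrence and divides by $-f_3$ (or by $-f_2$ in the last case) before raising to the $k$-th power; the same change of variables $(i,u,v)\mapsto(s,j-s,k-j)$ then collapses the trinomial sum into the required double binomial, and a final shift by the appropriate $E^{\pm\ell k}$ brings $X_m$ (rather than $X_{m\pm\ell k}$) to the right-hand side.

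The main obstacle I anticipate is not analytical but combinatorial bookkeeping: in each of the six cases one must verify that the reparametrization matches the prescribed pair of power factors in the prefactor and the specific linear combination $(\cdot)j+(\cdot)s$ that appears in the shift index. A reader who prefers to avoid operator notation can equivalently establish each identity by induction on $k$, the step from $k$ to $k+1$ using one more application of the recurrence and two applications of Pascal's rule to merge adjacent $(j,s)$-contributions; this verification is routine but tedious, which is why the operator formulation is the more economical route.
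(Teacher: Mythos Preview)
Your proposal is correct and structurally matches the paper's proof: both prove only \eqref{eq.wgx2r2f} and then obtain \eqref{eq.sm9bygb}--\eqref{eq.o7540wl} by rewriting the recurrence (your ``different rearrangement'' is exactly the paper's ``re-arranging the recurrence relation''). The one cosmetic difference is that the paper establishes \eqref{eq.wgx2r2f} by induction on $k$ in the style of Lemma~\ref{lem.i84yg3s}, whereas you lead with the operator/trinomial expansion and offer the induction only as an alternative; your operator argument is valid because the recurrence holds for every index, so the shift operator $L=(f_1/f_3)E^{-a}+(f_2/f_3)E^{-b}+E^{-c}$ acts on the whole sequence as multiplication by $1/f_3$, whence $L^k$ acts as $1/f_3^{\,k}$, and your reparametrisation $(i,u,v)=(s,j-s,k-j)$ checks out line by line.
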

\begin{proof}
Only identity~\eqref{eq.wgx2r2f} needs to be proved as identities~\eqref{eq.sm9bygb}--\eqref{eq.o7540wl} are obtained from~\eqref{eq.wgx2r2f} by re-arranging the recurrence relation. The proof of~\eqref{eq.wgx2r2f} is by induction on $k$, similar to the proof of Lemma~3 of~\cite{adegoke18}.
\end{proof}
\begin{thm}\label{thm.i50wspl}
The following identities hold for non-negative integer $k$, integer $m$ and integer $r\notin \{-17,-4,-1,0\}$:
\begin{equation}\label{eq.zhg6bhz}
\sum_{j = 0}^k {\sum_{s = 0}^j {\binom kj\binom js(T_{r - 1}  + T_{r} )^{j - s} \frac{{T_{r + 1} ^s }}{{T_{r} ^j }}T_{m - (r+2)k + j + s} } }  = \frac{{T_m }}{{T_{r}^k }}\,,
\end{equation}
\begin{equation}\label{eq.w0c5uq4}
\sum_{j = 0}^k {\sum_{s = 0}^j {\binom kj\binom js\frac{{T_{r} ^{j - s} T_{r + 1} ^s }}{{(T_{r - 1}  + T_{r} )^j }}T_{m - (r + 1)k - j + 2s} } }  = \frac{{T_m }}{{(T_{r - 1}  + T_{r} )^k }}\,,
\end{equation}
\begin{equation}\label{eq.y71b7qv}
\sum_{j = 0}^k {\sum_{s = 0}^j {\binom kj\binom js\frac{{T_{r - 1} ^{j - s} (T_{r - 2}  + T_{r - 1} )^s }}{{T_{r} ^j }}T_{m - (r - 1)k - 2j + s} } }  = \frac{{T_m }}{{T_{r} ^k }}\,,
\end{equation}
\begin{equation}\label{eq.blykfib}
\sum_{j = 0}^k {\sum_{s = 0}^j {( - 1)^s \binom kj\binom js\frac{{(T_{r - 1}  + T_{r} )^{j - s} }}{{T_{r} ^j }}T_{m - 2k + j + (r + 1)s} } }  = ( - 1)^k \left( {\frac{{T_{r + 1} }}{{T_{r} }}} \right)^k T_m\,, 
\end{equation}
\begin{equation}\label{eq.bw5q3gi}
\sum_{j = 0}^k {\sum_{s = 0}^j {( - 1)^s \binom kj\binom js\frac{{T_{r + 1} ^{j - s} }}{{T_{r} ^j }}T_{m - k + 2j + rs} } }  = ( - 1)^k \left( {\frac{{T_{r - 1}  + T_{r} }}{{T_{r} }}} \right)^k T_m 
\end{equation}
and
\begin{equation}\label{eq.ajfkce1}
\sum_{j = 0}^k {\sum_{s = 0}^j {( - 1)^s \binom kj\binom js\frac{{T_{r + 1} ^{j - s} }}{{(T_{r - 1}  + T_{r} )^j }}T_{m + k + j + rs} } }  = ( - 1)^k \left( {\frac{{T_{r} }}{{T_{r - 1}  + T_{r} }}} \right)^k T_m\,.
\end{equation}

\end{thm}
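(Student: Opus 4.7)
The plan is to derive all six identities from Feng's identity \eqref{eq.c1sfqfe} by applying Lemma \ref{lem.h2de9i7}. First, substituting $m \mapsto m - r$ in \eqref{eq.c1sfqfe} recasts it as the third-order recurrence
\[
T_m = T_{r+1}\, T_{m-r} + (T_{r-1} + T_r)\, T_{m-r-1} + T_r\, T_{m-r-2},
\]
valid for all integers $m, r$. This matches the template $X_m = f_1 X_{m-a} + f_2 X_{m-b} + f_3 X_{m-c}$ of Lemma \ref{lem.h2de9i7} with $X = T$, $f_1 = T_{r+1}$, $a = r$, $f_2 = T_{r-1}+T_r$, $b = r+1$, $f_3 = T_r$, $c = r+2$. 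The exclusions $r \notin \{-17,-4,-1,0\}$ (the values of $r$ at which $T_r = 0$ in the range under consideration) ensure that the $f_i$ are non-vanishing, so the Lemma applies.

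Next I would substitute this identification into the six identities \eqref{eq.wgx2r2f}--\eqref{eq.o7540wl}. Products of ratios such as $(f_2/f_3)^j(f_1/f_2)^s$ and $(f_2/f_3)^j(-1/f_2)^s$ unfold into the Tribonacci weights $(T_{r-1}+T_r)^{j-s}T_{r+1}^s / T_r^j$ etc.\ appearing in the theorem; the integer linear combinations $-ck+(c-b)j+(b-a)s$ and their cousins collapse to the simple shifts $j+s$, $-j+2s$, $j+(r+1)s$, $2j+rs$, $j+rs$ found in the summand indices; and the right-hand sides $X_m/f_3^k$, $X_m/f_2^k$, $(-f_1/f_3)^k X_m$, $(-f_2/f_3)^k X_m$, $(-f_3/f_2)^k X_m$ become the quoted Tribonacci expressions. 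This accounts for \eqref{eq.zhg6bhz}, \eqref{eq.w0c5uq4}, \eqref{eq.blykfib}, \eqref{eq.bw5q3gi}, and \eqref{eq.ajfkce1}.

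Identity \eqref{eq.y71b7qv} is the one requiring an extra twist: the third Lemma identity applied to the above recurrence produces $T_m/T_{r+1}^k$ on the right-hand side, not $T_m/T_r^k$. To obtain \eqref{eq.y71b7qv} I would rerun the argument after first replacing $r$ by $r-1$ in the recurrence, i.e.\ starting from
\[
T_m = T_r\, T_{m-r+1} + (T_{r-2}+T_{r-1})\, T_{m-r} + T_{r-1}\, T_{m-r-1}
\]
with $f_1 = T_r$, $a = r-1$, $f_2 = T_{r-2}+T_{r-1}$, $b = r$, $f_3 = T_{r-1}$, $c = r+1$. The third Lemma identity then gives the desired $T_m/T_r^k$ on the right, and the weight $(T_{r-1}/T_r)^j\bigl((T_{r-2}+T_{r-1})/T_{r-1}\bigr)^s$ together with the shift $-(r-1)k - 2j + s$ reproduces \eqref{eq.y71b7qv} exactly.

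The main obstacle is purely bookkeeping: six visually similar Lemma identities must be paired with the six theorem identities in the right order, and one must remember to shift $r$ by one before invoking the Lemma in order to recover \eqref{eq.y71b7qv}. Once the correct pairings are made, each of the six identities follows by direct algebraic substitution.
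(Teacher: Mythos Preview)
Your proposal is correct and follows essentially the same route as the paper: rewrite Feng's identity~\eqref{eq.c1sfqfe} as a third-order recurrence for $T_m$ and apply Lemma~\ref{lem.h2de9i7} with $X=T$. The paper labels the coefficients the other way round ($f_1=T_r$, $a=r+2$; $f_3=T_{r+1}$, $c=r$), which is an immaterial swap; in fact your observation that \eqref{eq.y71b7qv} needs the extra shift $r\mapsto r-1$ is a genuine detail that the paper's one-line proof passes over in silence.
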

\begin{proof}
Write the identity~\eqref{eq.c1sfqfe} as $T_{m}  = T_{r} T_{m-r-2}  + (T_{r - 1}  + T_{r} )T_{m - r - 1}  + T_{r+1} T_{m - r}$, identify $f_1=T_{r}$, $f_2=T_{r - 1}  + T_{r}$, $f_3=T_{r+1}$, $a=r+2$, $b=r+1$, $c=r$ and use these in Lemma~\ref{lem.h2de9i7} with $X=T$.
\end{proof}


\begin{thebibliography}{99}

\bibitem{adegoke18} K.~Adegoke, Weighted sums of some second-order sequences, \emph{arXiv:1803.09054[math.NT]} (2018).

\bibitem{ananta16} P.~Anantakitpaisal and K.~Kuhapatanakul, Reciprocal sums of the Tribonacci numbers, \emph{Journal of Integer sequences} {\bf 19} (2016), 1--9.

\bibitem{feng11} J.~Feng, More identities on the Tribonacci numbers, \emph{Ars Combinatorial} {\bf C} (2011), 73--78.

\bibitem{frontczak18} R.~Frontczak, Sums of Tribonacci and Tribonacci-Lucas numbers, \emph{International Journal of Mathematical Analysis} {\bf 12}:1 (2018), 19--24.

\bibitem{irmak13} N.~Irmak and M.~Alp, Tribonacci numbers with indices in arithmetic progression and their sums, \emph{Miskolc Mathematical Notes} {\bf 14}:1 (2013), 125--133.

\bibitem{kilic08} E.~Kilic, Tribonacci sequences with certain indices and their sums, \emph{Ars Combinatorial} {\bf 86} (2008), 13--22.


\bibitem{shah11} D.~V.~Shah, Some Tribonacci identities, \emph{Mathematics Today} {\bf 27} (2011), 1--9.


\end{thebibliography}
\end{document}